\title{Vertex Decomposition of Steiner Wiener Index and Steiner Betweenness Centrality} 
\keywords{Distance in graphs, Steiner diversity, Wiener index, Steiner Wiener index, Betweenness centrality, Steiner betweenness centrality}
\begin{document}


\begin{abstract}
A formula based on a vertex contributions of the Steiner $k$-Wiener index is induced by a newly introduced $k$-Steiner betweenness centrality, which measures the number of $k$-Steiner trees that include a particular vertex as a non-terminal vertex. This generalizes \v Skrekovski and Gutman's Vertex version of the Wiener Theorem and a result of Gago on the average betweenness centrality and the average distance in general graphs.
\end{abstract}

\section{Introduction}
In an undirected, finite, simple and connected graph $G$ the {\it (geodetic) distance} between two vertices $u, v \in V (G)$  is defined as the length of a shortest path connecting $u$ and $v$, and it is denoted by $d(u, v)$. For many other possibilities of defining a metric space on a graph see \cite{deza2009book}. One of the most investigated distance-based graph invariants is the {\it Wiener index}, defined as the sum of distances between all pairs of vertices of graph $G$:

\begin{equation*}
W(G)=  \sum_{u,v\in V (G)} d(u,v).
\end{equation*}

It has been introduced by Wiener \cite{wiener1947} in 1947,  when the correlations between the boiling points of paraffins and their molecular structure has been discovered and noted that in the case of a tree it can be easily calculated from the edge contributions by the following formula:

\begin{equation} \label{wienertheorem}
W(T)= \sum_{e\in E (T)} n(T_1)n(T_2),
\end{equation}

\noindent where $n(T_1)$ and $n(T_2)$ denote the number of vertices in connected components $T_1$ and $T_2$ formed by removing an edge $e$ from the tree $T$. 

Bryant and Tupper \cite{diversities2012},  have introduced a generalisation of a metric space called {\it diversity}, as a pair $(X, \delta)$ where $X$ is a set and $\delta$ is a function from the finite subsets of $X$ to $\mathbb{R}$ satisfying two axioms:
\begin{itemize}
\item[(D1)] $\delta(A) \geq 0$, and $\delta(A) = 0$ if and only if $|A|\leq 1$ and
\item[(D2)] If $B\neq 0$ then $ \delta(A \cup B) + \delta (B \cup C) \geq \delta (A \cup C)$ for all finite $A, B, C \subseteq X$. 
 \end{itemize}
Every diversity has an induced metric, given by $d(a, b) =\delta(\{a, b\})$ for all $a, b \in X$. See a forthcoming book \cite{deza2016book} for many other generalisations of finite metrics.

The classical Steiner tree problem is defined as follows: Given a set of points
in a metric space, find the shortest network interconnecting all given points.
Such a network is called a {\it Steiner tree} on the given set.
As a geometrical problem it is in a simpler form dating back already to Fermat and later to Euler and Steiner. In the last five decades it has attracted a significant research interest, due to many applications as diverse as optical and
wireless communication networks, VLSI-layout and the study of phylogenetic trees \cite{du2000book,du2008book, promel2002book, onlinecompendium}.

The Steiner diversity of a graph introduced by Chartrand et al. \cite{chartrand1989}, and called there and in the later publications Steiner distance of a graph, generalises in a natural way the concept of geodetic distance in a graph, see also \cite{goddard2011distance}. {\it The Steiner diversity} $\delta(S)$ among the vertices of $S \subseteq V (G)$ is defined as the minimum number of edges of a connected subgraph $T$ whose vertex set contains $S$. Note that this set is always a tree, called a {Steiner tree} connecting vertices of $S$. If $|S|=k$ then $T$ is also called a $k$-Steiner tree on $S$. Vertices from $S$ are called {\it terminal vertices} of Steiner tree $T$, and all other vertices of $T$ are called {\it non-terminal vertices} of $T$.
 
For a natural number $k$, $k \geq 2$, {\it the Steiner $k$-Wiener index} $SW_k(G)$ of a graph $G$ is defined as

\begin{equation*}
SW_k(G) =\sum_{\mathclap{\substack{S \subseteq V(G)\\ |S|=k}}} \delta(S).
\end{equation*}

{\it The average $k$-Steiner diversity of a graph $\overline{SW_k}(G)$} is defined as $\overline{SW_k}(G) = SW_k(G) / {n \choose k}$, where $n$ denotes the number of vertices of $G$. It has been first studied in \cite{dankelmann1996average, dankelmann1997average}.
An application of the $k$-Steiner Wiener index in chemistry  is reported in \cite{gutman2015multicenter}, where it is shown that the term $W(G)+ \lambda SW_k(G)$ provides a better  approximation for the boiling points of alkanes than $W(G)$ itself, and that the best such approximation is obtained for $k=7$. 

{\it The betweenness centrality} $B(v)$ of a vertex $v\in V (G)$ is defined as the sum of the fraction of all pairs of shortest paths that pass through $v$ across all pairs of vertices in a graph:

\begin{equation*}
B(v)= \sum_{
\substack{
x,y \in V(G)\setminus\{v\}\\
x\neq y}
}
\frac{\sigma_{x,y}(v)}{\sigma_{x,y}} ,
\end{equation*}

\noindent where $\sigma_{x,y}$ denotes the number of all shortest paths between vertices $x$ and $y$ in a graph $G$ and $\sigma_{x,y}(v)$ denotes the number of all shortest paths between vertices $x$ and $y$ in graph $G$ passing through the vertex $v$. In a case when a graph models a social or communication network, as the name suggests, it measures the centrality of a vertex in a graph, by the influence of a vertex in the dissemination of information over a network. It has been independently introduced by Anthonisse in \cite{Anthonisse1971} and by Freeman in \cite{freeman1977}, and among other applications has been applied to detect communities in networks \cite{girvan2002, newman2004}. See  \cite{gago2014} for a recent survey on results and problems on betweenness centrality and related notions. 

Vertex decomposition of the Wiener index has been studied by Caporossi et al. in \cite{caporossi2012}. \v Skrekovski and Gutman followed in \cite{skrekovski2014} with a nice formula for a vertex decomposition of Wiener index in the case of a tree and in a case of general graphs they have provided a formula which connects vertex decomposition of a Wiener index with the betweenness centrality. In terms of the average betweenness centrality and the average distance of a graph this has been first observed by Gago in \cite{gago2006} and published also in \cite{comellas2007spectral, gago2012}.

The aim of this paper is to generalise results from \cite{gago2006, skrekovski2014}  to Steiner diversity by introducing $k$-Steiner betweenness centrality. After introducing a convenient notation in the next paragraph we follow in the second and third section of the paper with formulas for edge and vertex decompositions of the Steiner $k$-Wiener index of a tree.  In fourth section a $k$-Steiner betweenness centrality is introduced and a formula for a vertex decomposition of the Steiner $k$-Wiener Index for general graphs is proved. In fifth section the average $3$-Steiner betweenness centrality of a modular graph is expressed in terms of its Wiener index. In the last section we conclude by analogous results as in the fourth section for the total Steiner betweenness centrality and the total Steiner Wiener Index.
 
For a graph $G$ let $n(G)$ denote the number of its vertices. For a forest (acyclic graph) $F$ with $p$, $p >1$, (connected) components $T_1, T_2, \ldots, T_p$ denote by $N_k(F)$ the sum over all partitions of $k$ into at least two nonzero parts of products of combinations distributed among the $p$ components of $F$:

\begin{equation*}
N_k(F)=\sum_{\substack{
l_1+l_2+\ldots+l_p=k\\
0\leq l_1,l_2,\ldots,l_p<k}} {n(T_1)\choose l_1} {n(T_2)\choose l_2} \ldots {n(T_p)\choose l_p}.
\end{equation*}

\noindent For a tree $T$ we define $N_k(T)=0$. Note that by the definition ${n\choose 0}=1$, and ${n\choose k}=0$ whenever $n<k$. For $k=2$ the notation $N_k(F)$ coincides with the one used in \cite{skrekovski2014}, but it does not necessarily coincide in the case when $k=3$.

\section{Edge decomposition of the Steiner Wiener index of a tree}

For a tree $T$ and $e\in E(T)$ let $T - e$ denote a graph obtained by removing edge $e$ from $T$. Note that $T - e$ consists of two connected components.

\begin{theorem} Let $T$ be a tree on $n$ vertices. Then,

\begin{equation}\label{SteinerWiener Theorem}
SW_k(T) = \sum_{e\in E(T)}N_k(T - e).
\end{equation}

\end{theorem}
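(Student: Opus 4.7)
The plan is to rely on the fact that in a tree the Steiner tree on a set $S \subseteq V(T)$ is unique: it is the minimal subtree of $T$ spanning $S$, and $\delta(S)$ equals the number of its edges. Writing this as an indicator sum gives
\begin{equation*}
\delta(S) = \sum_{e\in E(T)} \mathbf{1}\bigl[e \in \text{Steiner tree of } S\bigr],
\end{equation*}
and an interchange of summation yields
\begin{equation*}
SW_k(T) = \sum_{\substack{S\subseteq V(T)\\ |S|=k}} \delta(S) = \sum_{e\in E(T)} c_e,
\qquad c_e = \bigl|\{S\subseteq V(T): |S|=k,\ e\in\text{Steiner tree of }S\}\bigr|.
\end{equation*}

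Next, I would identify which $k$-subsets contribute to $c_e$. Removing $e$ disconnects $T$ into two subtrees $T_1^e$ and $T_2^e$. An edge $e$ belongs to the minimal subtree spanning $S$ precisely when $S$ meets both $V(T_1^e)$ and $V(T_2^e)$, since otherwise the minimal subtree is confined to one side of $e$. Classifying the qualifying $S$ by $l_i = |S\cap V(T_i^e)|$, we get
\begin{equation*}
c_e \;=\; \sum_{\substack{l_1+l_2=k\\ l_1,l_2\ge 1}} \binom{n(T_1^e)}{l_1}\binom{n(T_2^e)}{l_2}.
\end{equation*}

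Finally, I would observe that this is exactly $N_k(T-e)$. Indeed, the constraints $l_1+l_2=k$ with $0\le l_1,l_2<k$ in the definition of $N_k$ applied to the two-component forest $T-e$ force both $l_1,l_2\ge 1$ (if some $l_i=0$, then the other would equal $k$, which is forbidden), giving the same index set as above. Summing over $e\in E(T)$ yields \eqref{SteinerWiener Theorem}. I do not expect a serious obstacle: the proof rests entirely on uniqueness of the Steiner tree in a tree and a careful alignment between the binomial-product expansion and the combinatorial conventions used in the definition of $N_k$; the only care needed is to verify that the forbidden "all-on-one-side" partitions are correctly excluded by the constraint $l_i<k$.
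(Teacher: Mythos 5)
Your proof is correct and follows essentially the same route as the paper: both count, for each edge $e$, the number of $k$-subsets whose (unique) Steiner tree contains $e$, and identify this count with $N_k(T-e)$. Your version simply makes explicit the double-counting interchange and the check that the index constraints in $N_k$ exclude the all-on-one-side partitions, details the paper leaves implicit.
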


\begin{proof}
The expression $N_k(T -  e)$ counts how many times $e$ appears as an edge in a Steiner tree connecting $k$ vertices of $T$. Since the Steiner diversity between $k$ vertices equals the number of edges of the  $k$-Steiner tree connecting them, the equality in Formula \ref{SteinerWiener Theorem} follows.
\end{proof}

Since for any edge $e$, $T - e$ consists of exactly two components $T_1$ and $T_2$ it follows that

\begin{equation*}N_k(T - e)=\displaystyle \sum_{\substack{
l_1+l_2=k\\
0< l_1,l_2<k}}{n(T_1)\choose l_1} {n(T_2)\choose l_2},\end{equation*}
and therefore in this case Formula \ref{SteinerWiener Theorem} coincides with the one from \cite{gutman2015multicenter, kovse2016SWcut, gutman2015discuss}, if moreover $k=2$ it coincides with the classical Wiener result from \cite{wiener1947}.

\section{Vertex decomposition of the Steiner Wiener index of a tree}

For a tree $T$ and $v\in V(T)$ let $T - v$ denote a graph obtained by removing $v$ from $T$. Note that $T - v$ may consists of several components and that their number equals the degree of $v$.

\begin{theorem} Let $T$ be a tree on $n$ vertices. Then,

\begin{equation}\label{vertexSteinerWiener} 
SW_k(T) = \sum_{v\in V(T)}N_k(T -  v) + (k-1)\binom{n}{k}.
\end{equation}

\end{theorem}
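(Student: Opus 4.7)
The plan is to count, for each $k$-subset $S$ of terminals, the size of the unique Steiner tree $T_S \subseteq T$ on $S$, then swap the order of summation to get a vertex-by-vertex contribution.

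First I would use the fact that in a tree the Steiner tree on $S$ is unique and its number of edges is one less than its number of vertices, so $\delta(S) = |V(T_S)| - 1$. Summing over all $k$-subsets yields
\begin{equation*}
SW_k(T) \;=\; \sum_{\substack{S\subseteq V(T)\\ |S|=k}} |V(T_S)| \;-\; \binom{n}{k}.
\end{equation*}
Next I would swap the double sum to count, for each vertex $v$, the number of $k$-subsets $S$ for which $v \in V(T_S)$. Split this count into two cases according to whether $v$ is a terminal of $S$ or a non-terminal (i.e.\ a genuine Steiner vertex).

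If $v \in S$, then trivially $v \in V(T_S)$, and the number of such $S$ is $\binom{n-1}{k-1}$. Summing over $v$ gives $n\binom{n-1}{k-1} = k\binom{n}{k}$. If $v \notin S$, then $v$ lies in $V(T_S)$ exactly when $v$ is a non-terminal of the Steiner tree, which happens if and only if $S$ meets at least two components of $T - v$: indeed $v$ is an internal vertex of $T_S$ iff some pair of terminals of $S$ has its (unique) $T$-path through $v$, i.e.\ lies in distinct components of $T-v$. The number of $k$-subsets of $V(T)\setminus\{v\}$ distributing the $k$ terminals among the components of $T-v$ in such a way that no single component receives all $k$ is exactly $N_k(T - v)$ by its defining sum.

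Combining the two cases,
\begin{equation*}
\sum_{\substack{S\subseteq V(T)\\ |S|=k}} |V(T_S)| \;=\; k\binom{n}{k} \;+\; \sum_{v\in V(T)} N_k(T-v),
\end{equation*}
and subtracting $\binom{n}{k}$ yields the claimed formula. The only real point requiring care is the second case above, namely the clean identification of the non-terminal occurrences of $v$ in $T_S$ with the partitions of the $k$ terminals across the components of $T - v$ that put at least one terminal in two different components; everything else is bookkeeping via the standard identity $n\binom{n-1}{k-1}=k\binom{n}{k}$.
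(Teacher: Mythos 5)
Your proof is correct and follows essentially the same route as the paper's: the key step in both is identifying the non-terminal occurrences of a vertex $v$ in Steiner trees with the $k$-subsets of $V(T)\setminus\{v\}$ that meet at least two components of $T-v$, which is exactly what $N_k(T-v)$ counts. Your version merely rearranges the bookkeeping (counting all vertices of $T_S$ and subtracting $\binom{n}{k}$, rather than counting non-terminals directly and adding $k-1$ per subset), and in fact spells out the crucial identification more explicitly than the paper does.
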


\begin{proof}
The expression $N_k(T -  v)$ counts how many times is $v$ a non-terminal vertex of a Steiner tree connecting $k$ vertices of $T$. Since the Steiner diversity between $k$ vertices is by $k-1$ greater than the number of non-terminal vertices in the corresponding $k$-Steiner tree, adding $k-1$ for each set of $k$ vertices, we get the sum of $k$-Steiner diversities between all $k$ sets of vertices, and the equality in Formula \ref{vertexSteinerWiener} follows
\end{proof}

Many summands in $N_k(T - v)$ might be equal to zero, since for a pendent vertex $v$ of a tree $T$ we have that $T - v$ consist of only one component and therefore by the definition $N_k(T - v) = 0$ and the same holds also when $n(T_i) < l_i$ for some $i$, $1\leq i \leq p$.

\section{Vertex decomposition of the Steiner Wiener index for general graphs}

{\it The $k$-Steiner betweenness centrality} $B_k(v)$ of a vertex $v\in V (G)$ is defined as the sum of the fraction of all $k$-Steiner trees that include $v$ as its non-terminal vertex across all combinations of $k$ vertices of $G$:

\begin{equation*}
B_k(v)= \sum_{
\substack{
A\subseteq V(G)\setminus\{v\}\\
|A|=k}
}
\frac{\sigma_A(v)}{\sigma_A},
\end{equation*}

\noindent where $\sigma_{A}$ denotes the number of all Steiner trees between vertices of $A$ in a graph $G$ and $\sigma_{A}(v)$ denotes the number of all Steiner trees between vertices of $A$ in a graph $G$ that include also the vertex $v$ as a non-terminal vertex. The term "Steiner" is included in the above definition to separate our notion from the previously studied $k$-betweenness centrality from \cite{jiang2009k-betweenness}, where $k$ bounds the length of shortest paths between a pair of vertices.

\begin{theorem} Let $G$ be a connected graph on $n$ vertices. Then,

\begin{equation}\label{maintheorem}
SW_k(G) = \sum_{v\in V(G)}B_k(v) + (k-1)\binom{n}{k}.
\end{equation}

\end{theorem}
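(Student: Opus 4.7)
The plan is to mimic the vertex-decomposition argument for the tree case (Theorem for $SW_k(T)$), but now averaged over all Steiner trees on each terminal set, since in a general graph there can be many minimum Steiner trees on the same set $A$. The key structural fact is that for every $k$-subset $A\subseteq V(G)$, \emph{every} minimum Steiner tree $T$ on $A$ has exactly $\delta(A)$ edges, hence exactly $\delta(A)+1$ vertices, of which $k$ are the terminals in $A$ and the remaining $\delta(A)-k+1$ are non-terminals. So the number of non-terminals is an invariant of $A$, even though which specific vertices occur as non-terminals depends on the chosen tree.

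First I would fix $A$ and double-count pairs $(T,v)$, where $T$ is a minimum Steiner tree on $A$ and $v$ is a non-terminal of $T$. Summing over $T$ first gives $\sigma_A(\delta(A)-k+1)$; summing over $v$ first gives $\sum_{v\notin A}\sigma_A(v)$. Dividing by $\sigma_A$ yields the per-set identity
\begin{equation*}
\sum_{v\in V(G)\setminus A}\frac{\sigma_A(v)}{\sigma_A}\;=\;\delta(A)-(k-1).
\end{equation*}
Next I would sum this identity over all $k$-subsets $A$ of $V(G)$. The right-hand side becomes $SW_k(G)-(k-1)\binom{n}{k}$ by the definition of $SW_k(G)$ and because the number of $k$-subsets is $\binom{n}{k}$.

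Finally I would swap the order of summation on the left-hand side, rewriting
\begin{equation*}
\sum_{\substack{A\subseteq V(G)\\ |A|=k}}\sum_{v\in V(G)\setminus A}\frac{\sigma_A(v)}{\sigma_A}
\;=\;\sum_{v\in V(G)}\sum_{\substack{A\subseteq V(G)\setminus\{v\}\\ |A|=k}}\frac{\sigma_A(v)}{\sigma_A}
\;=\;\sum_{v\in V(G)}B_k(v),
\end{equation*}
which is precisely the definition of $B_k(v)$. Rearranging gives the desired formula.

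I do not expect any real obstacle; the only point that needs to be stated carefully is the invariance of the non-terminal \emph{count} $\delta(A)-k+1$ across the different minimum Steiner trees on $A$ (this is what lets the double count close even though the non-terminal \emph{sets} vary with $T$). Everything else is a Fubini-type interchange of summations and the definition of $B_k(v)$.
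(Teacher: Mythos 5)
Your proof is correct and matches the paper's argument essentially step for step: the paper likewise derives $n_i=\delta(A)-k+1$ from the tree identity $m=n_t+n_i-1$, establishes $\sum_{v\in V(G)\setminus A}\sigma_A(v)=(\delta(A)-k+1)\sigma_A$ by the same double count, and then interchanges the two summations to recover $\sum_{v}B_k(v)=SW_k(G)-(k-1)\binom{n}{k}$. Your explicit remark that the non-terminal \emph{count} is invariant across all minimum Steiner trees on $A$, even though the non-terminal \emph{sets} vary, is a worthwhile clarification that the paper leaves implicit.
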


\begin{proof}
 
For a Steiner tree $T$ on $n$ vertices and $m$ edges it holds that $m=n-1$. Let $n_t$ and $n_i$ denote the number of terminal and non-terminal vertices respectively. If follows that
$m = n_t + n_i  - 1$. Therefore for $A \subseteq V(G), |A|=k$, and for any Steiner tree $T$, with $n_i$ non-terminal vertices, whose set of terminal vertices equals $A$, it follows that $n_i  = \delta(A) - k +1$.  

The sum $\sum_{v\in V(G) \setminus A} \sigma_A(v)$ counts all possible Steiner $k$-trees between elements of $A$, which equals $\sigma_A$, with multiplicity of the number of their internal vertices, which equals $\delta(A)-k+1$. Hence it follows that
$\sum_{v\in V(G) \setminus A} \sigma_A(v)= (\delta(A) - k +1)\sigma_A$. Moreover

\begin{align*}
\sum_{v\in V(G)} B_k(v) &= \sum_{v\in V(G)} \sum_{
\substack{
A\subseteq V(G)\setminus\{v\}\\
|A|=k}
}\frac{\sigma_A(v)}{\sigma_A} \\
&=\sum_{
\substack{
A\subseteq V(G)\\
|A|=k}
} \sum_{v\in V(G) \setminus A} \frac{\sigma_A(v)}{\sigma_A} \\ 
&=\sum_{
\substack{
A\subseteq V(G)\\
|A|=k}}
\frac{1}{\sigma_A} \sum_{v\in V(G) \setminus A} \sigma_A(v)  \\
&=\sum_{
\substack{
A\subseteq V(G)\\
|A|=k}}
(\delta(A) - k +1)\\
&= SW_k(T) - (k-1)\binom{n}{k}.
\end{align*}

\end{proof}

For $k=2$ the Formula \ref{maintheorem} expresses Wiener index of graph in terms of the sum of betweenness centralities of its vertices, the main result from \cite{skrekovski2014}.

For a graph $G$ on $n$ vertices {\it the average $k$-Steiner betweenness $\overline{B_k}(G)$}  is defined as

\begin{equation*}
\overline{B_k}(G)=\frac 1n \sum_{v\in V(G)}B_k(v).
\end{equation*}

\begin{cor}
Let $G$ be a connected graph on $n$ vertices. Then,

\begin{equation}\label{averageBk}
\overline{B_k}(G) = \frac{\binom{n}{k}}{n} \left(\overline{SW_k}(G) - k+1\right).
\end{equation}

\end{cor}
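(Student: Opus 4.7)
The plan is to derive the corollary as a direct algebraic manipulation of the main theorem (Formula \ref{maintheorem}) combined with the definitions of the average quantities $\overline{B_k}(G)$ and $\overline{SW_k}(G)$. No new combinatorial argument should be needed, since the structural content has already been established in the proof of Theorem preceding the corollary.

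First I would rearrange Formula \ref{maintheorem} to isolate the sum of $k$-Steiner betweenness centralities, writing
\begin{equation*}
\sum_{v \in V(G)} B_k(v) = SW_k(G) - (k-1)\binom{n}{k}.
\end{equation*}
Dividing both sides by $n$ and invoking the definition of $\overline{B_k}(G)$ gives
\begin{equation*}
\overline{B_k}(G) = \frac{1}{n}\left(SW_k(G) - (k-1)\binom{n}{k}\right).
\end{equation*}

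Next, I would substitute $SW_k(G) = \binom{n}{k}\,\overline{SW_k}(G)$, using the definition of the average $k$-Steiner diversity, and then factor $\binom{n}{k}/n$ out of both terms on the right-hand side, yielding the desired identity
\begin{equation*}
\overline{B_k}(G) = \frac{\binom{n}{k}}{n}\left(\overline{SW_k}(G) - k + 1\right).
\end{equation*}

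Since every step is a one-line substitution from the theorem and the definitions, there is no genuine obstacle here; the corollary is simply a reformulation of Theorem's statement in terms of averages. If anything, the only point worth a brief remark is confirming that the factor $\binom{n}{k}/n$ cleanly absorbs both the $SW_k(G)/n$ term and the $(k-1)\binom{n}{k}/n$ term, which it does because $\binom{n}{k}$ is common to both summands once $SW_k(G)$ is rewritten via $\overline{SW_k}(G)$. Hence the proof should consist of at most three displayed equations mirroring the steps above.
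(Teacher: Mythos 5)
Your proposal is correct and matches the paper's (implicit) reasoning exactly: the corollary is stated without proof as an immediate algebraic consequence of Formula \ref{maintheorem} together with the definitions of $\overline{B_k}(G)$ and $\overline{SW_k}(G)$, which is precisely the substitution-and-division argument you give.
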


For $k=2$ Equality \ref{averageBk} becomes Formula for the average betweenness centrality expressed in terms of the average distance of a graph, which has been observed in \cite{comellas2007spectral, gago2006, gago2012,skrekovski2014}.

\section{Average $3$-Steiner betweenness centrality of modular graphs}

The Wiener index of any graph $G$ can be computed in polynomial time in terms of the number of vertices, and for some graph classes, like trees and benzenoid systems, even in linear time \cite{chepoi1997benzenoid}.

The problem {\it Steiner Tree in Graphs} is described as: Given a graph $G= (V, E)$, a weight $w(e)$ (a positive integer) for each $e \in E$, a subset $A \subseteq V(G)$ and a positive integer $t$, is there a subtree of $G$ that includes all the vertices of $A$ and such that the sum of the weights of the edges in the subtree is no more than $t$? This problem belongs to classical NP-complete problems from \cite{garey1979NP}, and it remains NP-complete even if all edge weights are equal, and even for a graph class of hypercubes \cite{miler1992hypercube}. Hence for $k>2$ it is not very likely to find an efficient way to compute Steiner $k$-Wiener index and $k$-Steiner betweenness centrality for general graphs. Therefore it becomes interesting to either look for lower and upper bounds, approximations in general case and exact results in particular cases. In this section we do the later.

A graph $G$ is {\it modular} if for every three vertices $x,y,z$ there exists a vertex $w$ that lies on a shortest path between every two vertices of $x, y, z$, if moreover this vertex is unique then $G$ is called a {\it median graph} \cite{van1993theory}. It is easy to see that all modular graphs are bipartite. Examples of modular graphs are trees, hypercubes, grids, complete bipartite graphs, etc. The simplest examples of non-modular are cycles on $n$ vertices, for $n\neq 4$.

The following theorem has been observed for trees in \cite{gutman2015discuss} and generalised to median graphs in \cite{kovse2016SWcut} and to modular graphs in \cite{kovse2016SWmodular}.

\begin{theorem} 
Let $G$ be a modular graph on $n$ vertices. Then,

\begin{equation*}
SW_3(G) = \frac{n-2}{2} W(G).
\end{equation*}

\end{theorem}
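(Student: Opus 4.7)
The plan is to reduce the statement to a simple double-counting identity once we prove the following pointwise formula for modular graphs: for every triple $\{x,y,z\}\subseteq V(G)$,
\begin{equation*}
\delta(\{x,y,z\}) \;=\; \frac{d(x,y)+d(y,z)+d(x,z)}{2}.
\end{equation*}
Given this identity, the theorem follows in one line. Summing over all $\binom{n}{3}$ triples and observing that each unordered pair $\{u,v\}$ appears in exactly $n-2$ triples, we get
\begin{equation*}
SW_3(G)=\sum_{\substack{S\subseteq V(G)\\ |S|=3}}\delta(S)=\tfrac12\sum_{\{x,y,z\}}\bigl(d(x,y)+d(y,z)+d(x,z)\bigr)=\tfrac12(n-2)W(G).
\end{equation*}

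So the work is concentrated in the pointwise identity, and I would establish it by proving matching upper and lower bounds. For the lower bound, which holds in any connected graph, I would pick any $3$-Steiner tree $T$ on $\{x,y,z\}$ and identify its (unique) branch vertex $c$ (a leaf/degree-$1$ Steiner point can be avoided in a minimal tree, so $c$ is either one of $x,y,z$ or a genuine non-terminal). Because the three $c$-to-terminal paths in $T$ are edge-disjoint and together cover $E(T)$, we get $|E(T)|\ge d(c,x)+d(c,y)+d(c,z)$, and by the triangle inequalities $d(c,x)+d(c,y)\ge d(x,y)$, etc., summing yields $2|E(T)|\ge d(x,y)+d(y,z)+d(x,z)$, i.e.\ $\delta(\{x,y,z\})\ge \tfrac12\bigl(d(x,y)+d(y,z)+d(x,z)\bigr)$.

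For the matching upper bound I would use modularity directly: by definition there exists $w\in V(G)$ lying on some shortest path between each pair among $x,y,z$, so $d(x,y)=d(x,w)+d(w,y)$ and similarly for the other two pairs. Adding these three equalities gives $d(x,y)+d(y,z)+d(x,z)=2\bigl(d(w,x)+d(w,y)+d(w,z)\bigr)$. The union of shortest $w$--$x$, $w$--$y$, $w$--$z$ paths is a connected subgraph containing $\{x,y,z\}$ of size at most $d(w,x)+d(w,y)+d(w,z)$, so $\delta(\{x,y,z\})\le d(w,x)+d(w,y)+d(w,z)=\tfrac12\bigl(d(x,y)+d(y,z)+d(x,z)\bigr)$, completing the identity.

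The main obstacle I anticipate is the upper-bound direction, specifically the subtle point that the union of three shortest paths from $w$ to the terminals may a priori share edges, which would still give an acceptable bound but one needs to handle the "at most" carefully; this is settled by noting that the union is a connected subgraph containing the three terminals, hence contains a Steiner tree of no larger size. Everything else (the lower bound via the branch vertex, and the final double count) is a routine verification once the identity is in hand.
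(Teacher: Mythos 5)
Your proof is correct and self-contained. Note that the paper itself gives no proof of this theorem -- it is quoted from \cite{gutman2015discuss, kovse2016SWcut, kovse2016SWmodular} -- so there is nothing to compare against line by line, but your route is the natural one and matches the spirit of those sources: the entire content is the pointwise identity $\delta(\{x,y,z\})=\tfrac12\bigl(d(x,y)+d(y,z)+d(x,z)\bigr)$, with the lower bound valid in every connected graph (via the branch vertex of a minimal Steiner tree, whose leaves are necessarily terminals, so the tree is either a path or a spider and your edge-disjoint decomposition applies) and the upper bound exactly where modularity enters, since the median-like vertex $w$ turns the three triangle inequalities into equalities. The final double count, with each pair lying in $n-2$ triples, is routine. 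Both bounds and the bookkeeping check out.
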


Combining this with the Collorary \ref{averageBk} we get the following formula for average 3-Steiner betweenness centrality of a modular graph.

\begin{cor}
Let $G$ be a modular graph on $n$ vertices. Then,

\begin{equation}\label{averageB3}
\overline{B_3}(G) = \frac{1}{n} \left(\frac{n-2}{2} W(G) - 2\binom{n}{3} \right).
\end{equation}

\end{cor}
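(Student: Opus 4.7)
The plan is essentially a direct substitution, since the corollary follows by combining two results already in hand: the general formula for $\overline{B_k}(G)$ given in Corollary with Equation \ref{averageBk}, and the theorem $SW_3(G) = \frac{n-2}{2}W(G)$ that holds for modular graphs.

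First I would specialize Equation \ref{averageBk} to $k=3$, which yields
\begin{equation*}
\overline{B_3}(G) = \frac{\binom{n}{3}}{n}\left(\overline{SW_3}(G) - 2\right).
\end{equation*}
Next I would unfold the definition $\overline{SW_3}(G) = SW_3(G)/\binom{n}{3}$ and distribute the prefactor $\binom{n}{3}/n$ over the difference, obtaining
\begin{equation*}
\overline{B_3}(G) = \frac{1}{n}\left(SW_3(G) - 2\binom{n}{3}\right).
\end{equation*}
Finally I would invoke the modular graph theorem cited just above the corollary, substituting $SW_3(G) = \frac{n-2}{2}W(G)$ into the parenthesized expression, which gives exactly Equation \ref{averageB3}.

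There is no substantive obstacle here: the content of the statement lies entirely in the two inputs (the vertex decomposition identity and the $SW_3$-to-$W$ reduction on modular graphs), and the corollary is simply the composition of them. The only care needed is bookkeeping of the factor $\binom{n}{3}/n$ when moving between the average and total versions of the invariants.
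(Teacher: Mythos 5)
Your proposal is correct and is exactly the route the paper takes: specialize Corollary (Equation \ref{averageBk}) to $k=3$, unfold $\overline{SW_3}(G)=SW_3(G)/\binom{n}{3}$, and substitute $SW_3(G)=\frac{n-2}{2}W(G)$ from the preceding theorem on modular graphs. The bookkeeping with the factor $\binom{n}{3}/n$ is handled correctly, so nothing is missing.
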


Hence for a modular graph one can compute in polynomial time the average 3-Steiner betweenness centrality.

For $d$-dimensional hypercube  $Q_d$ it is well known that $W(Q_d)= d4^{d-1}$ and $|V(Q_d)|=2^d$. Therefore by using Formula \ref{averageB3} we get that the average $3$-Steiner betweenness centrality of $d$-dimensional hypercube equals $\overline{B_3}(Q_d)=(2^{d-1}-1)(d2^{d-2} - \frac 13 (2^{d+1}-2))$. Since hypercubes are vertex transitive graphs it follows that for each vertex $v \in V(Q_d)$ it holds that $B_3(v)=(2^{d-1}-1)(d2^{d-2} - \frac 13 (2^{d+1}-2))$.

\section{The total Steiner betweenness centrality}

Motivated by applications of betweenness centrality in social network theory, one might be interested in all Steiner trees that include a given vertex as a non-terminal vertex. Therefore we define {\it the total Steiner betweenness centrality} (or simply Steiner betweenness centrality) $B_S(v)$ of a vertex $v\in V (G)$ as the sum of the fraction of all  Steiner trees that include $v$ as its non-terminal vertex across all combinations of vertices of $G$:

\begin{equation*}
B_S(v)= \sum_{
A\subseteq V(G)\setminus\{v\}}
\frac{\sigma_A(v)}{\sigma_A},
\end{equation*}

\noindent where $\sigma_{A}$ denotes the number of all Steiner trees between vertices of $A$ in a graph $G$ and $\sigma_{A}(v)$ denotes the number of all Steiner trees between vertices of $A$ in a graph $G$ that include also the vertex $v$ as a non-terminal vertex. {\it The average Steiner betweenness $\overline{B_k}(G)$}  is defined as $\overline{B_S}(G)=\frac 1n \sum_{v\in V(G)}B_S(v)$.

{\it Total Steiner Wiener index} (or simply Steiner Wiener index) $SW(G)$ of a graph $G$ is defined as

\begin{equation*}
SW(G) =\sum_{\mathclap{\substack{S \subseteq V(G)\\|S|\geq 2}}} \delta(S).
\end{equation*}

Set with $n$ elements has $2^n - n - 1$ subsets with cardinality at least two. Therefore we define {\it the average Steiner diversity of a graph $\overline{SW}(G)$} as $\overline{SW}(G) = \frac{1}{2^n - n - 1}SW(G)$, where $n$ denotes the number of vertices of $G$.

\begin{theorem} Let $G$ be a connected graph on $n$ vertices. Then,

\begin{equation*}\label{totalSW}
SW(G) = \sum_{v\in V(G)}B_S(v) +2^{n-1}(n-2)+1.
\end{equation*}

\end{theorem}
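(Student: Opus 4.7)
The plan is to follow exactly the template of the proof of the formula for $SW_k(G)$ from Section 4, replacing the sum over $k$-subsets by a sum over all subsets of size at least two, and then evaluate the resulting combinatorial correction term.

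First I would observe that the key edge-count identity used before still applies uniformly: for any $A \subseteq V(G)$ with $|A|\geq 2$ and any Steiner tree $T$ with terminal set $A$, the number of non-terminal vertices of $T$ equals $\delta(A)-|A|+1$. Summing over all non-terminals $v\notin A$ and all $\sigma_A$ Steiner trees on $A$ then gives $\sum_{v\in V(G)\setminus A}\sigma_A(v) = (\delta(A)-|A|+1)\sigma_A$, exactly as in the proof of Theorem~\ref{maintheorem}. Swapping the order of summation in $\sum_{v\in V(G)}B_S(v)$ would then produce
\begin{equation*}
\sum_{v\in V(G)} B_S(v) = \sum_{\substack{A\subseteq V(G)\\|A|\geq 2}} (\delta(A) - |A| + 1) = SW(G) - \sum_{\substack{A\subseteq V(G)\\|A|\geq 2}} (|A|-1).
\end{equation*}

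The remaining task is to evaluate the combinatorial correction $C_n := \sum_{|A|\geq 2}(|A|-1)$. Grouping subsets by cardinality,
\begin{equation*}
C_n = \sum_{k=2}^{n}\binom{n}{k}(k-1) = \sum_{k=2}^{n} k\binom{n}{k} - \sum_{k=2}^{n}\binom{n}{k} = \bigl(n2^{n-1} - n\bigr) - \bigl(2^n - 1 - n\bigr) = 2^{n-1}(n-2)+1,
\end{equation*}
using the standard identities $\sum_{k=0}^n\binom{n}{k}=2^n$ and $\sum_{k=0}^n k\binom{n}{k}=n2^{n-1}$. Substituting back yields $SW(G)=\sum_v B_S(v) + 2^{n-1}(n-2)+1$, as required.

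There is essentially no hard step here: the structural content is identical to Theorem~\ref{maintheorem}, and the only novelty is the closed-form evaluation of $C_n$. The one point worth checking carefully is that the identity $\sum_{v\notin A}\sigma_A(v)=(\delta(A)-|A|+1)\sigma_A$ degenerates correctly when $|A|=n$: here $\delta(A)=n-1$ so the right-hand side is $0$, matching the fact that a spanning tree has no non-terminal vertices, so this boundary case is automatically absorbed and requires no separate treatment.
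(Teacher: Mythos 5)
Your proof is correct and rests on the same double-counting identity $\sum_{v\notin A}\sigma_A(v)=(\delta(A)-|A|+1)\sigma_A$ as the paper; the only difference is organizational — the paper sums the already-proved formula for $SW_k(G)$ over $k=2,\dots,n-1$ and treats $SW_n(G)=n-1$ separately, while you run the swap of summation once over all subsets and note that the $|A|=n$ term degenerates to zero automatically. Your closed-form evaluation of $\sum_{k=2}^{n}(k-1)\binom{n}{k}=2^{n-1}(n-2)+1$ matches the paper's constant, so the argument is complete.
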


\begin{proof}
Using Theorem \ref{maintheorem} and the equality  $\sum_{k=2}^{n-1} (k-1)  \binom{n}{k} = (2^{n-1}-1) (n-2)$ it follows that,
\begin{align*}
SW(G) &= \sum_{k=2}^n SW_k(G)\\
&= \sum_{k=2}^{n-1} SW_k(G) + SW_n(G)\\
&=  \sum_{k=2}^{n-1} \left(\sum_{v\in V(G)}B_k(v) + (k-1)\binom{n}{k}\right) + n -1\\
&= \sum_{v\in V(G)}\sum_{k=2}^{n-1}  B_k(v) + \sum_{k=2}^{n-1}(k-1)\binom{n}{k} + n -1\\
&=  \sum_{v\in V(G)}B_S(v)+2^{n-1}(n-2)+1.
\end{align*}
 
\end{proof}

The following corollary then immediately follows.

\begin{cor}
Let $G$ be a connected graph on $n$ vertices. Then,

\begin{equation*}\label{averageBS}
\overline{B_S}(G) = \frac{1}{n} \left((2^n - n - 1)\overline{SW}(G) - 2^{n-1}(n-2)-1 \right).
\end{equation*}

\end{cor}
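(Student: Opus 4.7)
The plan is to obtain this corollary as an immediate algebraic consequence of the preceding theorem (the identity $SW(G) = \sum_{v\in V(G)}B_S(v) + 2^{n-1}(n-2)+1$) together with the two definitional relations $\overline{B_S}(G) = \tfrac{1}{n}\sum_{v\in V(G)}B_S(v)$ and $SW(G) = (2^n - n - 1)\,\overline{SW}(G)$. No new combinatorial argument is needed, since all the bookkeeping over subset sizes $k = 2, \ldots, n$ has already been handled inside the proof of the theorem.

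Concretely, I would first solve the theorem's identity for the sum of centralities, writing
\[
\sum_{v\in V(G)} B_S(v) \;=\; SW(G) \;-\; 2^{n-1}(n-2) \;-\; 1.
\]
Next I would divide both sides by $n$, which by definition produces $\overline{B_S}(G)$ on the left. Finally, I would replace $SW(G)$ by $(2^n - n - 1)\,\overline{SW}(G)$, using the definition of the average total Steiner diversity (noting that the number of subsets of an $n$-element set of cardinality at least two is exactly $2^n - n - 1$). Assembling these three substitutions yields the stated formula.

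There is no genuine obstacle here, only clerical care: one must make sure that the normalizing factor $2^n - n - 1$ coming from $\overline{SW}$ is used rather than, say, $2^{n}-1$ or $2^{n-1}$, and that the additive correction $2^{n-1}(n-2) + 1$ is carried through with the correct sign after the division by $n$. Once these are tracked carefully, the corollary reduces to a single line of algebra.
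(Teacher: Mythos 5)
Your proposal is correct and matches the paper's intent exactly: the paper states that this corollary ``immediately follows'' from the preceding theorem, and the immediate derivation is precisely the algebraic rearrangement you describe (solve for $\sum_{v} B_S(v)$, divide by $n$, and substitute $SW(G) = (2^n - n - 1)\overline{SW}(G)$). No further comment is needed.
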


\section{Acknowledgment}

This work was supported by the Deutsche Forschungsgemeinschaft within the EUROCORES Programme EUROGIGA (project GReGAS) of the European Science Foundation, while the author was working at the Department of Bioinformatics, University of Leipzig, and by Max Planck Institute for Mathematics in the Sciences, Leipzig.


\end{document}